\newcommand{\vect}[1]{\boldsymbol{#1}}
\newtheorem{theorem}{Theorem}[section]
\newtheorem{defn}[theorem]{Definition}
\newtheorem{lemma}[theorem]{Lemma}
\newtheorem{remark}[theorem]{Remark}
\newcommand{\R}{\Bbb{R}}
\newcommand{\N}{\Bbb{N}}
\newenvironment{pfthm1}{{\par\noindent\bf
Proof of Theorem~\ref{thm1} }}{\hfill\fbox{}\par\vspace{.2cm}}
\title{On Liouville type theorems in the stationary non-Newtonian fluids}
\begin{document}
\renewcommand{\theequation}{\thesection.\arabic{equation}}
\author{Dongho Chae\footnote{E-mail: dchae@cau.ac.kr } , Junha Kim\footnote{E-mail: jha02@cau.ac.kr }   and J\"{o}rg Wolf\footnote{E-mail: jwolf2603@gmail.com }  \,\,\\
Department of Mathematics,\\Chung-Ang University\\
 Seoul 06974, Republic of Korea \\
  }
\date{}
\maketitle
\begin{abstract}
In this paper we prove a Liouville type theorem  for  the stationary   equations   
of  a  non-Newtonian fluid in $\R^3$ with the viscous part of the stress tensor 
$\vect{A}_p(u) = \mathrm{div} ( | \vect{D}(u) |^{p-2} \vect{D}(u) )$, where $\vect{D}(u) = \frac 12 ( \nabla u + ( \nabla u )^{\top})$ and $\frac 95 < p < 3$. We consider a weak solution $u \in W^{1,p}_{loc}(\R^3)$ 
and its potential function $\vect{V} = (V_{ij}) \in W^{2,p}_{loc}(\R^3)$, i.e. $\nabla \cdot \vect{V} = u$.
We show that there exists a constant $s_0=s_0(p)$ such that if  the $L^s$ mean oscillation of $\vect{V}$ for $s>s_0$  satisfies a  certain growth condition at infinity, then the velocity field vanishes.  Our result includes the previous results \cite{CW20, CW19} as particular cases. \\
 \ \\
\noindent{\bf AMS Subject Classification Number:}
35Q35, 35Q30, 76A05, 76D05, 76D03\\
  \noindent{\bf
keywords:} non-Newtonian fluid equations, Liouville type theorem  
\end{abstract}
 
\section{Introduction}
We consider a power law model of non-Newtonian fluid in $\R^3$
\begin{equation}\label{NNFE}
\left\{ \aligned &
- \vect{A}_p(u) + (u \cdot \nabla) u = - \nabla \uppi \quad \mbox{in} \quad \R^3,\\
&\quad\mathrm{div}\, u = 0,
\endaligned \right.
\end{equation}
where $u = (u_1(x), u_2(x), u_3(x))$ is the velocity field, $\uppi = \uppi(x)$ is the pressure field. The diffusion term is represented by
$$
\vect{A}_p(u) = \mathrm{div} ( | \vect{D}(u) |^{p-2} \vect{D}(u) ), \qquad 1 < p < + \infty
$$
and the deviatoric stress tensor is interpreted as $|\vect{D}|^{p-2} \vect{D} = \vect{\upsigma}(\vect{D})$, where $\vect{D}(u) = \frac 12 ( \nabla u + ( \nabla u )^{\top})$ is the symmetric gradient. If $2 < p < + \infty$, the equations describe shear thickening fluids, of which viscosity increases along with shear rate $|\vect{D}(u)|$. If $1 < p < 2$, shear thinning fluids satisfy them. In the case of $p = 2$, \eqref{NNFE} corresponds to the usual stationary Navier-Stokes equations which represent Newtonian fluids. We refer to Wilkinson \cite{W60} for continuum mechanical background of the above system. 

The Liouville problem for the stationary Navier-Stokes equations (Galdi \cite{G11}, Remark X. 9.4, pp. 729) has attracted considerable attention in the mathematical fluid mechanics. Though it is still open, there are positive answers under additional conditions (see \cite{CY13,C14,CW16,CJL19,GW78,KNSS09,KPR15,KTW17,S16,S18,SW19}). And as a generalization, Liouville type theorems for non-Newtonian fluids have been investigated (see \cite{JK14, CW20}). 

Let $u \in L^1_{loc}(\R^3)$ be a vector field, and let $\vect{V} = (V_{ij}) \in L^1_{loc}(\R^3;\R^3 \times \R^3)$ be a matrix valued function satisfying $\mathrm{div}\, \vect{V} = u$ in the distributional sense. In \cite{CW19} Chae and Wolf proved Liouville type theorem for the stationary Navier-Stokes equations when the following is assumed
$$
\left( \fint_{B(r)} |\vect{V} - \vect{V}_{B(r)} |^s \mathrm{d}x \right)^{\frac 1s} \leq C r^{\min \left\{ \frac 13 - \frac 1s, \frac 16 \right\}} \qquad \forall  1 < r < + \infty
$$
for some $3 < s < + \infty$. They also considered it in \eqref{NNFE} when $\frac 95 < p < 3$ but only for $s = \frac {3p}{2p - 3}$ in \cite{CW20}. We generalize these results.

As is well known, weak solutions are actually smooth for $p = 2$. Otherwise there is only partial regularity of weak solutions \cite{P96,FMS03}. In this paper, we consider weak solutions, which is defined as follows :

\begin{defn}\label{WS}
Let $\frac 95 < p < 3$. A function $u \in W^{1,p}_{loc}(\R^3)$ is called a weak solution to \eqref{NNFE} if
\begin{equation}\label{WF1}
\int_{\R^3} \left( | \vect{D}(u) |^{p-2} \vect{D}(u) - u \otimes u \right) : \vect{D}(\varphi) \mathrm{d}x = 0
\end{equation}
is fulfilled for all vector fields $\varphi \in C^{\infty}_c(\R^3)$ with $\mathrm{div}\, \varphi = 0$. 
\end{defn}

\begin{remark}
Let $u$ be a weak solution. Then, there exists $\uppi \in L^{\frac p{p - 1}}_{loc}(\R^3)$ satisfying
\begin{equation}\label{PE}
\int_{B(r)} |\uppi - \uppi_{B(r)}|^s \mathrm{d}x \leq C \int_{B(r)} | | \vect{D}(u) |^{p-2} \vect{D}(u) - u \otimes u|^s \mathrm{d}x, \quad \forall 0 < r < + \infty
\end{equation}
for all $\frac 32 \leq s \leq \frac p{p - 1}$. And $(u, \uppi)$ holds
\begin{equation}\label{WF2}
\int_{\R^3} \left( | \vect{D}(u) |^{p-2} \vect{D}(u) - u \otimes u \right) : \vect{D}(\varphi) \mathrm{d}x = \int_{\R^3} \uppi \mathrm{div}\, \varphi \mathrm{d}x
\end{equation}
for any vector field $\varphi \in W^{1,p}(\R^3)$ with compact support. Hence, \eqref{WF2} { replaces} \eqref{WF1}. We refer to \cite{CW20} for a brief explanation.
\end{remark}

\begin{remark}
Let $(u, \uppi)$ be a weak solution and $\phi \in C^{\infty}_c(\R^3)$. If we take $\varphi = u \phi$, then \eqref{WF2} with $\varphi \in W^{1,p}(\R^3)$ yields the local energy equality
\begin{equation}\label{LEI}
\int_{\R^3} | \vect{D}(u) |^p \phi \mathrm{d}x= - \int_{\R^3} | \vect{D}(u) |^{p - 2} \vect{D}(u) : u \otimes \nabla \phi \mathrm{d}x + \int_{\R^3} \left( \frac 12 | u |^2 + \uppi \right) u \cdot \nabla \phi \mathrm{d}x.
\end{equation}
\end{remark}

\begin{theorem}\label{thm1}
Let $\frac 95 < p < 3$, and $\frac 32 < s < +\infty$ satisfy
$$
s \geq \frac {3p}{2(2p - 3)}, \qquad s > \frac {9 - 3p}{2p - 3}.
$$
Let $(u,\uppi) \in W^{1,p}_{loc}(\R^3) \times L^{\frac p{p - 1}}_{loc}(\R^3)$ be a weak solution to \eqref{NNFE}. We set 
$$
\alpha(p,s) := \min\left\{ \frac 13 - \frac {5p - 9}{s(2p - 3)}, \frac 3p - \frac 43 \right\}.
$$
If we assume there exists a potential $\vect{V} \in W^{2,p}_{loc}(\R^3; \R^{3\times3})$ such that
\begin{equation}\label{AS}
\left( \fint_{B(r)} |\vect{V} - \vect{V}_{B(r)} |^s \mathrm{d}x \right)^{\frac 1s} \leq C r^{\alpha(p,s)} \qquad \forall  1 < r < + \infty,
\end{equation}
then $u \equiv 0$.
\end{theorem}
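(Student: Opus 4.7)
The plan is to derive a Caccioppoli-type decay estimate of the form $\int_{B(r)} |\vect{D}(u)|^p\,\mathrm{d}x \leq C r^{-\delta}$ with some $\delta > 0$, and then send $r \to \infty$ to conclude $\vect{D}(u) \equiv 0$, which together with the growth hypothesis on $\vect{V}$ will force $u\equiv 0$. The starting point is the local energy equality \eqref{LEI} applied with a standard cutoff $\phi = \phi_r \in C^\infty_c(B(2r))$ satisfying $\phi\equiv 1$ on $B(r)$ and $|\nabla^k\phi| \leq C r^{-k}$. Both right-hand side terms of \eqref{LEI} are supported in the annulus $A_r := B(2r)\setminus B(r)$, which is where all scaling in $r$ will be generated.

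The crucial step is to trade $u$ for the potential $\vect{V}$. Formally writing $u_i = \partial_j V_{ij}$ and integrating by parts in each of the two right-hand side terms of \eqref{LEI} transfers one derivative off $u$ onto a different factor, producing integrals of the schematic form $\int \widetilde V_{ij}\,G_{ij}(\vect{D}(u),u,\uppi,\nabla\phi,\nabla^2\phi)\,\mathrm{d}x$, where $\widetilde{\vect{V}} := \vect{V} - \vect{V}_{B(2r)}$ (any constant shift of $\vect{V}$ is admissible, since the original identity only involves $u$). The low regularity of $|\vect{D}(u)|^{p-2}\vect{D}(u)$ is handled by working from \eqref{WF2} with a mollified divergence-free test function built from $\vect{V}\phi$ and passing to the limit. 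By H\"older's inequality, peel off the factor $\|\widetilde{\vect{V}}\|_{L^s(B(2r))} \leq C r^{3/s + \alpha(p,s)}$ via \eqref{AS}, and estimate the remaining $L^{s'}$-factor using Korn's inequality (to relate $\vect{D}(u)$ and $\nabla u$ in $L^p$), the Sobolev embedding $W^{1,p}\hookrightarrow L^{p^*}$ with $p^* = 3p/(3-p)$, H\"older interpolation, and the pressure estimate \eqref{PE}.

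A careful bookkeeping should reveal that the two quantities in $\alpha(p,s)= \min\{\tfrac13 - \tfrac{5p-9}{s(2p-3)},\,\tfrac3p - \tfrac43\}$ correspond, respectively, to the critical scalings of the viscous term $|\vect{D}(u)|^{p-2}\vect{D}(u)\cdot u\otimes\nabla\phi$ and of the convective-pressure term $(\tfrac12|u|^2+\uppi)\,u\cdot\nabla\phi$, and that the restrictions $s\geq\tfrac{3p}{2(2p-3)}$ and $s>\tfrac{9-3p}{2p-3}$ are exactly what is required for every H\"older exponent to be admissible. Young's inequality absorbs the resulting appearances of $\int_{A_r}|\vect{D}(u)|^p$ into the left-hand side (possibly combined with a Giaquinta--Modica hole-filling iteration), producing the desired decay. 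Letting $r\to\infty$ yields $\vect{D}(u)\equiv 0$, so $u = c + \omega \times x$ for some constants $c,\omega \in \R^3$; any nonzero $c$ or $\omega$ would force $\|\vect{V} - \vect{V}_{B(r)}\|_{L^s(B(r))}$ to grow like $r^{1+3/s}$ or $r^{2+3/s}$ respectively, which is inconsistent with \eqref{AS} because one checks that $\alpha(p,s) < 1$ throughout the admissible range.

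The main technical obstacle is precisely the H\"older--Sobolev--pressure bookkeeping: the $p$-growth structure of $\vect{A}_p$ forces different Lebesgue exponents than in the Navier--Stokes case of \cite{CW19}, and the pressure estimate \eqref{PE} is available only for $s \in [\tfrac32,\tfrac{p}{p-1}]$, so the pressure may first need to be controlled at an auxiliary exponent $s^\star$ inside that window and only afterwards connected to the hypothesis at the exponent $s$ through interpolation. Making all these exponents simultaneously work on the full range $\tfrac95 < p < 3$ is what dictates the precise form of the assumptions on $s$ and $\alpha(p,s)$, and this is where the novelty over \cite{CW19,CW20} must lie.
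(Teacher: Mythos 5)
Your high-level architecture is aligned with the paper's: start from the local energy equality \eqref{LEI} with a cutoff supported at scale $r$, use the potential $\vect{V}$ and the growth hypothesis \eqref{AS} to squeeze out decay, and conclude via a Liouville-type rigidity at infinity. However, the specific mechanism you propose for bringing $\vect{V}$ into the estimate contains a genuine gap. You propose replacing $u_i = \partial_j V_{ij}$ and integrating by parts directly in \emph{each} of the two right-hand side terms of \eqref{LEI}, claiming the result has the schematic form $\int \widetilde V_{ij}\,G_{ij}(\vect{D}(u),u,\uppi,\nabla\phi,\nabla^2\phi)\,\mathrm{d}x$. But integrating by parts in the viscous term $\int |\vect{D}(u)|^{p-2}\vect{D}(u):u\otimes\nabla\phi\,\mathrm{d}x$ necessarily produces $\partial_j\big(|\vect{D}(u)|^{p-2}\vect{D}(u)_{ik}\big)$ (and the pressure term similarly produces $\nabla\uppi$), so $G_{ij}$ in fact depends on derivatives that a $W^{1,p}_{loc}\times L^{p/(p-1)}_{loc}$ weak solution does not control; mollifying a test function built from $\vect{V}\phi$ makes the manipulations algebraically legitimate at the approximate level but the limiting terms still involve those uncontrolled derivatives, so the passage to the limit is exactly where this breaks. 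The constraints on $s$ would never emerge from such an argument because the offending terms cannot even be bounded.

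What the paper does instead is to keep the viscous and pressure factors untouched: applying only H\"older and Young to \eqref{LEI} produces $\int_{B(\overline R)}|\vect{D}(u)|^p\zeta^p \lesssim \int|u|^p|\nabla\zeta|^p + \int|u|^3|\nabla\zeta|\,\zeta^{p-1} + \int|\uppi - \uppi_{B(\overline R)}|^{3/2}|\nabla\zeta|\,\zeta^{p-1}$, and the pressure term is converted by \eqref{PE} (at the fixed exponent $3/2$, where it is always admissible) back into $\nabla u$ and $u$ integrals. Only then, on the \emph{pure} powers $\int|\psi^p u|^p$ and $\int|\psi^3 u|^3$, is the potential introduced: writing $u_j = \partial_i V_{ij}$ and integrating by parts there lands the derivative on $u_j|u|^{p-2}\psi^{\cdot}$ (resp. $u_j|u|\psi^{\cdot}$), producing only $\nabla u\in L^p_{loc}$ and $\nabla\psi$ and nothing worse. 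This is the content of Lemmas~\ref{LEMPP} and \ref{LEM3P}, and it is exactly where the H\"older--Sobolev--interpolation bookkeeping you anticipated takes place and where the hypotheses $s\ge \frac{3p}{2(2p-3)}$, $s>\frac{9-3p}{2p-3}$ and the two branches of $\alpha(p,s)$ emerge. The paper also runs an intermediate bootstrap (using the iteration lemma of Giaquinta) to first establish $\int_{\R^3}|\nabla u|^p<\infty$ before deriving the $o(1)$ decay of the annulus integrals; your proposal skips this step and it is needed to make the final absorption argument close. Your concluding reasoning (if $\vect{D}(u)\equiv 0$ then $u$ is a rigid motion, and any nonzero rigid motion contradicts \eqref{AS} since $\alpha(p,s)<1$) is correct, though the paper reaches the conclusion more directly by controlling $\nabla u$ through the Calder\'on--Zygmund/Korn step \eqref{CZ} and then using the $L^3$ annulus decay to kill the constant.
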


\begin{remark}
If $s = \frac {3p}{2p-3}$, then we obtain $\alpha = \frac {9 - 4p}{3p}$, which consistent with Theorem 1.3 (ii) in \cite{CW20}.
\end{remark}

\begin{remark}
 In the case of $p=2$, we have $3 < s < + \infty$ and $\alpha = \min \{\frac s3 - 1, \frac s6\}$, corresponding to Theorem 1.1 in \cite{CW19}.
\end{remark}

We denote by $C(p, s) = C$ a generic constant that may vary from line to line. 

\section{Caccioppoli type inequalities}

To prove Theorem~\ref{thm1}, we need the following two lemmas.
\begin{lemma}\label{LEMPP}
Let $\frac 95 < p < 3$. Let $u \in W^{1,p}_{loc}(\R^3)$ and $1 \leq R < + \infty$. For $0 < \rho < R$, we let $\psi \in C^\infty_c(B(R))$ satisfy $0 \leq \psi \leq 1$ and $| \nabla \psi | \leq C (R - \rho)^{-1}$. If we assume there exists $\vect{V} \in W^{2,p}_{loc}(\R^3)$ such that $\mathrm{div}\, \vect{V} = u$ and \eqref{AS} for some $\frac {3p}{2(2p - 3)} \leq s \leq \frac {3p}{2p - 3}$, then for $s \geq p$
\begin{equation}\label{PPPS}
\int_{B(R)} |\psi^p u|^p \mathrm{d}x \leq C R^{\frac 32 + \frac p6 - \frac {p(5p - 9)}{2s(2p - 3)}} \| \psi \nabla u \|_{L^p}^{\frac p2} + C (R - \rho)^{-p} R^{3 + \frac p3 - \frac {p(5p - 9)}{s(2p - 3)}},
\end{equation}
and for $s < p$
$$
\int_{B(R)} |\psi^p u|^p \mathrm{d}x \leq C R^{\frac {p^2}{3(2p - 3)}} \| \psi \nabla u \|_{L^p}^{\frac {p(sp - 3s + 3p)}{2sp - 3s + 3p}} + C (R - \rho)^{- \frac {3p}{2s} + \frac 32} R^{\frac p6 + \frac {p^2}{2s(2p - 3)}} \| \psi \nabla u \|_{L^p}^{\frac p2}
$$
\begin{equation}\label{PPSP}
+ C (R - \rho)^{- \frac {sp^2}{sp + 3p - 3s}} R^{\frac {p^2(2sp + 3p - 3s)}{3(2p - 3)(sp + 3p - 3s)}} \| \psi \nabla u \|_{L^p}^{\frac {p(3p - 3s)}{sp - 3s + 3p}} + C (R - \rho)^{- p - \frac {3p}s + 3} R^{\frac p3 + \frac {p^2}{s(2p - 3)}}.
\end{equation}
\end{lemma}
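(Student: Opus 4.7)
The plan is a Caccioppoli-type estimate: integrate $u=\mathrm{div}\,\vect{V}$ by parts, use H\"older with the $L^s$-control of $W:=\vect{V}-\vect{V}_{B(R)}$ coming from \eqref{AS}, and absorb the resulting $J:=\int\psi^{p^2}|u|^p\,dx$ on the left by Young's inequality. Since $\psi\in C_c^\infty(B(R))$, writing one copy of $u$ as $\partial_j V_{ij}$ and integrating by parts with $|\partial_j(|u|^{p-2}u_i)|\leq C|u|^{p-2}|\nabla u|$ yields
$$J\leq C\!\int\psi^{p^2-1}|\nabla\psi||u|^{p-1}|W|\,dx+C\!\int\psi^{p^2}|u|^{p-2}|\nabla u||W|\,dx=:I_1+I_2.$$
The hypothesis \eqref{AS}, combined with the fact that $s\leq\tfrac{3p}{2p-3}$ selects the active branch $\alpha(p,s)=\tfrac13-\tfrac{5p-9}{s(2p-3)}$, gives $\|W\|_{L^s(B(R))}\leq CR^{\alpha(p,s)+3/s}$.

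When $s\geq p$ I would estimate $I_1$ by H\"older with conjugate exponents $(s,\tfrac{s}{s-1})$ and $I_2$ by the triple H\"older $(s,p,a)$ with $\tfrac1a=1-\tfrac1s-\tfrac1p$, so as to extract $\|W\|_{L^s}$ and, in $I_2$, also $\|\psi\nabla u\|_{L^p}$. The remaining integrals have the form $\int\psi^{\gamma}|u|^r\,dx$ with $r\leq p$ precisely because $s\geq p$; a further H\"older against the volume of $B(R)$ then bounds them by $J^{\theta}R^{3(1-\theta)}$ for some $\theta\in(0,1)$. One application of Young's inequality absorbs $\varepsilon J$ back into the left-hand side, and collecting the $R$- and $(R-\rho)^{-p}$-powers (using the explicit form of $\alpha$) produces \eqref{PPPS}.

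The case $s<p$ is more delicate because the residual $u$-exponent $r$ now exceeds $p$ and the preceding volume-H\"older is no longer available. I would instead invoke the Sobolev embedding $W^{1,p}_0(B(R))\hookrightarrow L^{p^*}(B(R))$, $p^*=\tfrac{3p}{3-p}$, applied to $\psi^k u$ with $k=\tfrac{p^2-1}{p-2}$; this exponent is chosen large enough ($k\geq p+1$) so that $p(k-1)\geq p^2$ and hence $\|\psi^{k-1}u\|_{L^p}^p\leq J$, giving
$$\|\psi^k u\|_{L^{p^*}(B(R))}\leq C\|\psi\nabla u\|_{L^p}+C(R-\rho)^{-1}J^{1/p}.$$
After an intermediate H\"older that rewrites the residuals of $I_1,I_2$ as powers of $\|\psi^k u\|_{L^{p^*}}$ (which requires $(p-2)a\leq p^*$, verified in the admissible range of $s$), each of $I_1$ and $I_2$ splits into a $\|\psi\nabla u\|_{L^p}$-branch and a $J^{1/p}$-branch; two Young inequalities, one to close the intermediate H\"older and one to absorb the $J^{1/p}$-branch into $\varepsilon J$, produce the four separate terms of \eqref{PPSP}.

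The main obstacle is not any single inequality but the rigidity of the exponent bookkeeping in the $s<p$ case: the intermediate H\"older, the Sobolev step with $\psi^k$, and the two distinct Young applications each introduce a parameter coupled to $s,p,p^*,a$, and one must verify that the resulting $R$- and $(R-\rho)$-powers coincide exactly with $\tfrac{p(sp-3s+3p)}{2sp-3s+3p}$, $\tfrac{p(3p-3s)}{sp-3s+3p}$, $-\tfrac{3p}{2s}+\tfrac32$, $-p-\tfrac{3p}{s}+3$, and the matching $R$-exponents. This verification is tedious but requires no new conceptual ingredient beyond the three steps above.
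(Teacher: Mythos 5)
Your strategy for $2 \le p < 3$ is essentially the paper's: integrate $u = \mathrm{div}\,\vect V$ by parts against $|u|^{p-2}u_i\psi^{p^2}$, estimate the two resulting terms with H\"older against $\|\vect V - \vect V_{B(R)}\|_{L^s(B(R))}$, and close either with a volume H\"older (when $s\ge p$) or with a Gagliardo--Nirenberg/Sobolev step (when $s<p$), finishing with Young. I traced the exponent bookkeeping in the $s\ge p$ case and it does reproduce \eqref{PPPS}. In the $s<p$ case the paper \emph{interpolates} $\||\psi^{p+1}u|^{p-2}\|_{L^{sp/(sp-s-p)}}$ between $L^{3p/((3-p)(p-2))}$ and $L^{p/(p-2)}$ (thereby producing a $J^{\theta}$ factor for free before Sobolev is ever applied), whereas your phrase ``intermediate H\"older against the volume of $B(R)$'' suggests replacing the lower endpoint by $|B(R)|$; a pure volume H\"older here loses the $\|\psi^{p+1}u\|_{L^p}$-branch and the final $(R-\rho)$- and $R$-powers no longer match those in \eqref{PPSP} (e.g. the second term's $(R-\rho)$-exponent comes out $-p(p-2)/2$ instead of $-3p/(2s)+3/2$). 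If you mean genuine interpolation between $L^{p^*}$ and $L^p$, you are doing what the paper does; if you mean H\"older against $|B(R)|$, the exponents will not close.

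The genuine gap is the range $\frac95 < p < 2$. There the exponent $p-2$ is negative, so your starting integration by parts and the subsequent triple H\"older both involve $|u|^{p-2}$ as a \emph{negative} power of $|u|$. In particular $\| \psi^{p^2-1}|u|^{p-2}\|_{L^a}$, with $a = sp/(sp-s-p)$, requires $\int |u|^{(p-2)a}\,\mathrm{d}x$ with $(p-2)a<0$, which has no a priori bound for $u\in W^{1,p}_{loc}$ (it can diverge wherever $u$ vanishes), and your ``further H\"older against the volume'' which needs $0 \le r \le p$ breaks down for $r = (p-2)a<0$. Even the $L^1_{loc}$-integrability of $|u|^{p-2}|\nabla u|$, needed to justify the integration by parts, is unclear. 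The paper sidesteps this for $\frac95<p<2$ (where $s\ge \frac{3p}{2(2p-3)}>p$ automatically, so only \eqref{PPPS} is needed) by first applying H\"older to bound $\int|\psi^p u|^p \le C R^{3-3p/2}\big(\int |u|^2\psi^{2p}\big)^{p/2}$, and then integrating by parts in $\int |u|^2\psi^{2p}\,\mathrm{d}x$, which is bilinear in $u$ and contains no singular power of $|u|$. You should add this preliminary reduction in the case $p<2$; as written, your proof only covers $2\le p<3$.
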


\begin{proof}
We first let $\frac 95 < p < 2$. Beacuse $s \geq \frac {3p}{2(2p - 3)} \geq p$, we show \eqref{PPPS} in this case. H\"{o}lder's inequality implies that
\begin{equation}\label{PP}
\int_{B(R)} |\psi^p u|^p \mathrm{d}x \leq | B(R) |^{1 - \frac p2} \left( \int_{B(R)} |\psi^p u|^2 \mathrm{d}x \right)^{\frac p2} = C R^{3 - \frac {3p}2} \left( \int_{B(R)} | u |^2 \psi^{2p} \mathrm{d}x \right)^{\frac p2}.
\end{equation}
Recalling $u = \mathrm{div}\, \vect{V}$ and using integration by parts, we have
$$
\int_{B(R)} | u |^2 \psi^{2p} \mathrm{d}x = \int_{B(R)} \partial_i \left(\vect{V}_{ij} - (\vect{V}_{ij})_{B(R)} \right) u_j \psi^{2p} \mathrm{d}x
$$
$$
= - \int_{B(R)} \left(\vect{V}_{ij} - (\vect{V}_{ij})_{B(R)} \right) \partial_i u_j \psi^{2p} \mathrm{d}x - 2p \int_{B(R)} \left(\vect{V}_{ij} - (\vect{V}_{ij})_{B(R)} \right) u_j \psi^{2p - 1} \partial_i \psi \mathrm{d}x
$$
$$
\leq \int_{B(R)} | \vect{V} - \vect{V}_{B(R)} | | \psi \nabla u | \psi^{2p - 1} \mathrm{d}x + 2p \int_{B(R)} | \vect{V} - \vect{V}_{B(R)} | | \psi^p u | | \nabla \psi | \psi^{p - 1} \mathrm{d}x
$$
$$
\leq \int_{B(R)} | \vect{V} - \vect{V}_{B(R)} | | \psi \nabla u | \mathrm{d}x + C (R - \rho)^{-1} \int_{B(R)} | \vect{V} - \vect{V}_{B(R)} | | \psi^p u | \mathrm{d}x.
$$
Note that $\frac {3p}{2(2p - 3)} \leq s \leq \frac {3p}{2p - 3}$ implies
\begin{equation*}
\frac 1s + \frac 1p < 1
\end{equation*}
and
$$
\alpha = \frac 13 - \frac {5p - 9}{s(2p - 3)}.
$$
{ Using} H\"{o}lder's inequality and \eqref{AS}, { we obtain}
$$
\int_{B(R)} | u |^2 \psi^{2p} \mathrm{d}x \leq \left( \fint_{B(R)} | \vect{V} - \vect{V}_{B(R)} |^s \mathrm{d}x \right)^{\frac 1s} \| \psi \nabla u \|_{L^p} | B(R) |^{1 - \frac 1p}
$$
$$
+ C (R - \rho)^{-1} \left( \fint_{B(R)} | \vect{V} - \vect{V}_{B(R)} |^s \mathrm{d}x \right)^{\frac 1s} \| \psi^p u \|_{L^p} | B(R) |^{1 - \frac 1p}
$$
$$
\leq C R^{\frac {10}3 - \frac {5p - 9}{s(2p - 3)} - \frac 3p} \| \psi \nabla u \|_{L^p} + C (R - \rho)^{-1} R^{\frac {10}3 - \frac {5p - 9}{s(2p - 3)} - \frac 3p} \| \psi^p u \|_{L^p}.
$$
Inserting { this inequality}  into \eqref{PP} and applying Young's inequality, we have
$$
\int_{B(R)} |\psi^p u|^p \mathrm{d}x \leq C R^{\frac 32 + \frac p6 - \frac {p(5p - 9)}{2s(2p - 3)}} \| \psi \nabla u \|_{L^p}^{\frac p2} + C (R - \rho)^{- \frac p2} R^{\frac 32 + \frac p6 - \frac {p(5p - 9)}{2s(2p - 3)}} \| \psi^p u \|_{L^p}^{\frac p2}
$$
$$
\leq C R^{\frac 32 + \frac p6 - \frac {p(5p - 9)}{2s(2p - 3)}} \| \psi \nabla u \|_{L^p}^{\frac p2} + C (R - \rho)^{-p} R^{3 + \frac p3 - \frac {p(5p - 9)}{s(2p - 3)}} + \frac 12 \int_{B(R)} |\psi^p u|^p \mathrm{d}x,
$$
which implies \eqref{PPPS}.

Now, we let $2 \leq p < 3$. Using $u = \mathrm{div}\, \vect{V}$, integration by parts and H\"{o}lder's inequality, we find
$$
\int_{B(R)} |\psi^p u|^p \mathrm{d}x = \int_{B(R)} \partial_i \left(\vect{V}_{ij} - (\vect{V}_{ij})_{B(R)} \right) u_j | u |^{p - 2} \psi^{p^2} \mathrm{d}x
$$
$$
= - \int_{B(R)} \left(\vect{V}_{ij} - (\vect{V}_{ij})_{B(R)} \right) \left( \partial_i u_j | u |^{p - 2} + (p-2) u_j u_k \partial_i u_k | u |^{p - 4} \right) \psi^{p^2} \mathrm{d}x
$$
$$
 - p^2 \int_{B(R)} \left(\vect{V}_{ij} - (\vect{V}_{ij})_{B(R)} \right) u_j | u |^{p - 2} \psi^{p^2 - 1} \partial_i \psi \mathrm{d}x
$$
$$
\leq C \int_{B(R)} | \vect{V} - \vect{V}_{B(R)} | | \psi \nabla u | | \psi^{p + 1} u |^{p - 2} \mathrm{d}x 
$$
$$
+ C (R - \rho)^{-1} \int_{B(R)} | \vect{V} - \vect{V}_{B(R)} | | \psi^{p + 1} u |^{p - 1} \mathrm{d}x
$$
$$
\leq C \| \vect{V} - \vect{V}_{B(R)} \|_{L^s(B(R))} \| \psi \nabla u \|_{L^p} \| | \psi^{p + 1} u |^{p-2} \|_{L^{\frac {sp}{sp-s-p}}}
$$
\begin{equation*}
+ C (R - \rho)^{-1} \| \vect{V} - \vect{V}_{B(R)} \|_{L^s(B(R))} \| | \psi^{p + 1} u |^{p-1} \|_{L^{\frac s{s-1}}} = I + II.
\end{equation*}

We assume $s \geq p$ first. Since
$$
1 \leq \frac {sp}{sp-s-p} \leq \frac p{p-2}
$$
by applying H\"{o}lder's inequality to $I$ we have
$$
I \leq C \| \vect{V} - \vect{V}_{B(R)} \|_{L^s(B(R))} \| \psi \nabla u \|_{L^p} \| \psi^p u \|_{L^p}^{p - 2} | B(R) |^{\frac1p - \frac 1s}.
$$
Subsequently, we use \eqref{AS} and Young's inequality. This { yields}
$$
I \leq C R^{\frac 3p} \left( \fint_{B(R)} |\vect{V} - \vect{V}_{B(R)} |^s \mathrm{d}x \right)^{\frac 1s} \| \psi \nabla u \|_{L^p} \| \psi^p u \|_{L^p}^{p-2}
$$
$$
\leq C R^{\frac 3p + \frac 13 - \frac {5p - 9}{s(2p - 3)}} \| \psi \nabla u \|_{L^p} \| \psi^p u \|_{L^p}^{p-2}
$$
\begin{equation}\label{PPI1}
\leq C R^{\frac 32 + \frac p6 - \frac {p(5p - 9)}{2s(2p - 3)}} \| \psi \nabla u \|_{L^p}^{\frac p2} + \frac 14 \int |\psi^p u|^p \mathrm{d}x.
\end{equation}
{ Arguing  similarly to the above, having}
$$
1 < \frac s{s-1} \leq \frac p{p-1},
$$
we can calculate $II$ as follows
$$
II \leq C (R - \rho)^{-1} \| \vect{V} - \vect{V}_{B(R)} \|_{L^s(B(R))} \| \psi^{p + 1} u \|_{L^p}^{p-1} | B(R) |^{\frac 1p - \frac 1s}
$$
$$
\leq C (R - \rho)^{-1} R^{\frac 3p} \left( \fint_{B(R)} |\vect{V} - \vect{V}_{B(R)} |^s \mathrm{d}x \right)^{\frac 1s} \| \psi^p u \|_{L^p}^{p-1}
$$
$$
\leq C (R - \rho)^{-1} R^{\frac 3p + \frac 13 - \frac {5p - 9}{s(2p - 3)}} \| \psi^p u \|_{L^p}^{p-1}
$$
\begin{equation}\label{PPII1}
\leq C (R - \rho)^{-p} R^{3 + \frac p3 - \frac {p(5p - 9)}{s(2p - 3)}} + \frac 14 \int |\psi^p u|^p \mathrm{d}x.
\end{equation}
\eqref{PPI1} and \eqref{PPII1} show \eqref{PPPS}.

Now we assume $s < p$. { Since it holds}
$$
\frac p{p-2} < \frac {sp}{sp-s-p} \leq \frac {3p}{3-p} < \frac {3p}{(3-p)(p-2)},
$$
the standard interpolation inequality implies that
$$
\| | \psi^{p + 1} u |^{p-2} \|_{L^{\frac {sp}{sp-s-p}}} \leq \| | \psi^{p + 1} u |^{p-2} \|_{L^{\frac {3p}{(3-p)(p-2)}}}^{\frac {3p - 3s}{sp(p - 2)}} \| | \psi^{p + 1} u |^{p-2} \|_{L^{\frac p{p-2}}}^{\frac {sp(p - 2) - 3p + 3s}{sp(p - 2)}}
$$
$$
= \| \psi^{p + 1} u \|_{L^{\frac {3p}{3-p}}}^{\frac {3p - 3s}{sp}} \| \psi^{p + 1} u \|_{L^p}^{\frac {sp(p - 2) - 3p + 3s}{sp}}.
$$
{ Thus,} by using Sobolev inequality here, { we arrive at}
$$
\| | \psi^{p + 1} u |^{p-2} \|_{L^{\frac {sp}{sp-s-p}}} \leq C \left( \| \psi^{p + 1} \nabla u \|_{L^p} + (R - \rho)^{-1} \| \psi^p u \|_{L^p} \right)^{\frac {3p - 3s}{sp}} \| \psi^{p + 1} u \|_{L^p}^{\frac {sp(p - 2) - 3p + 3s}{sp}}
$$
$$
\leq C \| \psi \nabla u \|_{L^p}^{\frac {3p - 3s}{sp}} \| \psi^p u \|_{L^p}^{\frac {sp(p - 2) - 3p + 3s}{sp}} + C (R - \rho)^{- \frac {3p - 3s}{sp}} \| \psi^p u \|_{L^p}^{p - 2}.
$$
{ Inserting the inequality we just obtained  into} $I$ and applying \eqref{AS} along with Young's inequality, { we find}
$$
I \leq C R^{\frac 3s} \left( \fint_{B(R)} |\vect{V} - \vect{V}_{B(R)} |^s \mathrm{d}x \right)^{\frac 1s} \| \psi \nabla u \|_{L^p}^{\frac {sp + 3p - 3s}{sp}} \| \psi^p u \|_{L^p}^{\frac {sp(p - 2) - 3p + 3s}{sp}}
$$
$$
+ C (R - \rho)^{- \frac {3p - 3s}{sp}}  R^{\frac 3s} \left( \fint_{B(R)} |\vect{V} - \vect{V}_{B(R)} |^s \mathrm{d}x \right)^{\frac 1s} \| \psi \nabla u \|_{L^p} \| \psi^p u \|_{L^p}^{p - 2}
$$
$$
\leq C R^{\frac 3s + \frac 13 - \frac {5p - 9}{s(2p - 3)}} \| \psi \nabla u \|_{L^p}^{\frac {sp + 3p - 3s}{sp}} \| \psi^p u \|_{L^p}^{\frac {sp(p - 2) - 3p + 3s}{sp}}
$$
$$
+ C (R - \rho)^{- \frac {3p - 3s}{sp}} R^{\frac 3s + \frac 13 - \frac {5p - 9}{s(2p - 3)}} \| \psi \nabla u \|_{L^p} \| \psi^p u \|_{L^p}^{p - 2}
$$
$$
\leq C R^{\frac {p^2}{3(2p - 3)}} \| \psi \nabla u \|_{L^p}^{\frac {p(sp + 3p - 3s)}{2sp + 3p - 3s}} + C (R - \rho)^{-\frac {3p}{2s} + \frac 32} R^{\frac {3p}{2s} - \frac 32 + \left( \frac 32 + \frac p6 - \frac {p(5p - 9)}{2s(2p - 3)} \right)} \| \psi \nabla u \|_{L^p}^{\frac p2} 
$$
\begin{equation}\label{PPI2}
+ \frac 14 \int |\psi^p u|^p \mathrm{d}x.
\end{equation}
We note that $s < p$ implies
$$
\frac p{p-1} < \frac s{s-1} < \frac {3p}{2(3-p)} < \frac {3p}{(3-p)(p-1)}.
$$
Thus, using Interpolation inequality and Sobolev inequality, we infer
$$
\| | \psi^{p + 1} u |^{p - 1} \|_{L^{\frac s{s - 1}}} \leq \| | \psi^{p + 1} u |^{p-1} \|_{L^{\frac {3p}{(3-p)(p-1)}}}^{\frac {3p - 3s}{sp(p - 1)}} \| | \psi^{p + 1} u |^{p-1} \|_{L^{\frac p{p-1}}}^{\frac {sp(p - 1) - 3p + 3s}{sp(p - 1)}}
$$
$$
= \| \psi^{p + 1} u \|_{L^{\frac {3p}{3-p}}}^{\frac {3p - 3s}{sp}} \| \psi^{p + 1} u \|_{L^p}^{\frac {sp(p - 1) - 3p + 3s}{sp}}
$$
$$
\leq C \left( \| \psi^{p + 1} \nabla u \|_{L^p} + (R - \rho)^{-1} \| \psi^p u \|_{L^p} \right)^{\frac {3p - 3s}{sp}} \| \psi^{p + 1} u \|_{L^p}^{\frac {sp(p - 1) - 3p + 3s}{sp}}
$$
$$
\leq C \| \psi \nabla u \|_{L^p}^{\frac {3p - 3s}{sp}} \| \psi^p u \|_{L^p}^{\frac {sp(p - 1) - 3p + 3s}{sp}} + C (R - \rho)^{- \frac {3p - 3s}{sp}} \| \psi^p u \|_{L^p}^{p - 1}.
$$
{ Inserting this inequality } into $II$ and using \eqref{AS} and { Young's} inequality,  { it follows}
$$
II \leq C (R - \rho)^{-1} R^{\frac 3s} \left( \fint_{B(R)} |\vect{V} - \vect{V}_{B(R)} |^s \mathrm{d}x \right)^{\frac 1s} \| \psi \nabla u \|_{L^p}^{\frac {3p - 3s}{sp}} \| \psi^p u \|_{L^p}^{\frac {sp(p - 1) - 3p + 3s}{sp}}
$$
$$
+ C (R - \rho)^{- 1 - \frac {3p - 3s}{sp}}  R^{\frac 3s} \left( \fint_{B(R)} |\vect{V} - \vect{V}_{B(R)} |^s \mathrm{d}x \right)^{\frac 1s} \| \psi^p u \|_{L^p}^{p - 1}
$$
$$
\leq C (R - \rho)^{-1} R^{\frac 3s + \frac 13 - \frac {5p - 9}{s(2p - 3)}} \| \psi \nabla u \|_{L^p}^{\frac {3p - 3s}{sp}} \| \psi^p u \|_{L^p}^{\frac {sp(p - 1) - 3p + 3s}{sp}}
$$
$$
+ C (R - \rho)^{- 1 - \frac {3p - 3s}{sp}} R^{\frac 3s + \frac 13 - \frac {5p - 9}{s(2p - 3)}} \| \psi^p u \|_{L^p}^{p - 1}
$$
$$
\leq C (R - \rho)^{- \frac {sp^2}{sp + 3p - 3s}} R^{\frac {p^2(2sp + 3p - 3s)}{3(2p - 3)(sp + 3p - 3s)}} \| \psi \nabla u \|_{L^p}^{\frac {p(3p - 3s)}{sp + 3p - 3s}}
$$
\begin{equation}\label{PPII2}
+ C (R - \rho)^{- p - \frac {3p}s + 3} R^{\frac {3p}s + \frac p3 - \frac {p(5p - 9)}{s(2p - 3)}} + \frac 14 \int |\psi^p u|^p \mathrm{d}x.
\end{equation}
By \eqref{PPI2} and \eqref{PPII2} we complete the proof.
\end{proof}

\begin{lemma}\label{LEM3P}
Let $\frac 95 < p < 3$. Let $u \in W^{1,p}_{loc}(\R^3)$ and $1 \leq R < + \infty$. For $0 < \rho < R$, we let $\psi \in C^\infty_c(B(R))$ satisfy $0 \leq \psi \leq 1$ and $| \nabla \psi | \leq C (R - \rho)^{-1}$. If we assume there exists $\vect{V} \in W^{2,p}_{loc}(\R^3)$ such that $\mathrm{div}\, \vect{V} = u$ and \eqref{AS} for some $\frac {3p}{2(2p - 3)} \leq s \leq \frac {3p}{2p - 3}$, then for $s \geq 3$
$$
\int_{B(R)} |\psi^3 u|^3 \mathrm{d}x \leq C R \| \psi \nabla u \|_{L^p}^{\frac {9p}{2sp + 3p - 3s}} + C R \left( (R - \rho)^{-1} \| \psi^p u \|_{L^p} \right)^{\frac {9p}{2sp + 3p - 3s}}
$$
\begin{equation}\label{3P3S}
+ C (R - \rho)^{-3} R^{4 - \frac {3(5p - 9)}{s(2p - 3)}},
\end{equation}
and for $s < 3$
$$
\int_{B(R)} |\psi^3 u|^3 \mathrm{d}x \leq C R \| \psi \nabla u \|_{L^p}^{\frac {9p}{2sp + 3p - 3s}} + C R \left( (R - \rho)^{-1} \| \psi^p u \|_{L^p} \right)^{\frac {9p}{2sp + 3p - 3s}}
$$
$$
+ C (R - \rho)^{-\frac {3s(2p - 3)}{sp + 3p - 3s}} R^{\frac {2sp + 3p - 3s}{sp + 3p - 3s}} \| \psi \nabla u \|_{L^p}^{\frac {3p(3 - s)}{sp + 3p - 3s}}
$$
\begin{equation}\label{3PS3}
+ C (R - \rho)^{-\frac {3s(2p - 3)}{sp + 3p - 3s}} R^{\frac {2sp + 3p - 3s}{sp + 3p - 3s}} \left( (R - \rho)^{-1} \| \psi^p u \|_{L^p} \right)^{\frac {3p(3 - s)}{sp + 3p - 3s}}.
\end{equation}
\end{lemma}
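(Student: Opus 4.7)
The approach parallels Lemma~\ref{LEMPP}, with the target integral $\int |\psi^p u|^p \,\mathrm{d}x$ replaced by $\int |\psi^3 u|^3 \,\mathrm{d}x = \int \psi^9 |u|^3 \,\mathrm{d}x$, and the exponent split $s \gtrless p$ replaced by $s \gtrless 3$. Begin by writing $|u|^3 = u_j u_j |u|$ and substituting one factor $u_j = \partial_i(V_{ij} - (V_{ij})_{B(R)})$, then integrate by parts. The derivative landing on $u_j |u| \psi^9$ yields contributions of size $C\psi^9 |u||\nabla u|$ (from $\partial_i u_j$ or $\partial_i |u|$) and $9C\psi^8 |\nabla \psi||u|^2$ (from $\partial_i \psi^9$); splitting the $\psi$ weights as $\psi^9 = \psi \cdot \psi^8$ and $\psi^8 = (\psi^4)^2$, we obtain
\[
\int_{B(R)} \psi^9 |u|^3 \,\mathrm{d}x \leq C I + C(R-\rho)^{-1} II,
\]
where $I = \int_{B(R)} |\vect{V} - \vect{V}_{B(R)}| |\psi \nabla u| |\psi^8 u|\,\mathrm{d}x$ and $II = \int_{B(R)} |\vect{V} - \vect{V}_{B(R)}| |\psi^4 u|^2 \,\mathrm{d}x$.

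Apply H\"older's inequality to $I$ with exponents $(s, p, r_1)$, where $r_1 = \frac{sp}{sp-s-p}$, and to $II$ with exponents $(s, \frac{s}{s-1})$, producing a factor $\|\psi^4 u\|_{L^{r_2}}^2$ with $r_2 = \frac{2s}{s-1}$. The hypotheses $\frac{3p}{2(2p-3)} \leq s \leq \frac{3p}{2p-3}$ force $3 \leq r_1 \leq \frac{3p}{3-p}$, so $\|\psi^8 u\|_{L^{r_1}}$ may be estimated by Gagliardo--Nirenberg interpolation between $L^3$ and $L^{3p/(3-p)}$: Sobolev controls the high-exponent norm via $\|\psi^8 u\|_{L^{3p/(3-p)}} \leq C\|\psi \nabla u\|_{L^p} + C(R-\rho)^{-1}\|\psi^p u\|_{L^p}$ (using $\psi^8 \leq \psi$ and $\psi^7 \leq \psi^p$), while the $L^3$-factor is dominated by $\|\psi^8 u\|_{L^3} \leq \|\psi^3 u\|_{L^3} = \bigl(\int \psi^9 |u|^3\,\mathrm{d}x\bigr)^{1/3}$. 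Bounding $\|\vect V - \vect V_{B(R)}\|_{L^s(B(R))} \leq CR^{3/s+\alpha}$ via \eqref{AS} and applying Young's inequality to absorb the $\int \psi^9 |u|^3$ factor on the left, the key algebraic identities $(\alpha+3/s)\cdot \frac{3}{3-\mu} = 1$ and $(2-\mu)\cdot \frac{3}{3-\mu} = \frac{9p}{2sp+3p-3s}$, with $\mu = 2 - \frac{3p}{s(2p-3)}$, produce the two main terms $CR\|\psi \nabla u\|_{L^p}^{9p/(2sp+3p-3s)}$ and $CR\bigl((R-\rho)^{-1}\|\psi^p u\|_{L^p}\bigr)^{9p/(2sp+3p-3s)}$ that appear in both \eqref{3P3S} and \eqref{3PS3}.

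The case split enters through the treatment of $II$. If $s \geq 3$, then $r_2 \leq 3$ and H\"older gives $\|\psi^4 u\|_{L^{r_2}} \leq CR^{3/r_2 - 1}\|\psi^3 u\|_{L^3}$; a subsequent Young's inequality produces the single auxiliary term $C(R-\rho)^{-3} R^{4 - 3(5p-9)/(s(2p-3))}$ of \eqref{3P3S}. If $s < 3$, then $r_2 > 3$ and an additional Gagliardo--Nirenberg interpolation of $\|\psi^4 u\|_{L^{r_2}}$ between $L^3$ and $L^{3p/(3-p)}$ is required; tracking the new interpolation exponent through Young's inequality yields the two extra terms of \eqref{3PS3} with exponent $\frac{3p(3-s)}{sp+3p-3s}$.

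The principal obstacle is algebraic bookkeeping: the chain of H\"older, Sobolev, Gagliardo--Nirenberg, and Young inequalities must compose so that the exponents of $R$, $R-\rho$, $\|\psi \nabla u\|_{L^p}$, and $\|\psi^p u\|_{L^p}$ collapse to the stated forms. Verifying that the interpolation parameter $\mu = 2 - \frac{3p}{s(2p-3)}$ lies in $[0,1]$ precisely under the assumed bounds on $s$ is essential, as is the fact that the $R$-exponent collapses to $1$ for the two main terms.
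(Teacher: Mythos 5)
Your proposal is correct and follows essentially the same route as the paper's proof: the identical integration by parts after substituting $u_j = \partial_i(V_{ij} - (V_{ij})_{B(R)})$, the same split into $I$ and $II$, the same Gagliardo--Nirenberg interpolation of $\|\psi^4 u\|_{L^{r_1}}$ (your $\psi^8$ is dominated by $\psi^4$, so the difference is immaterial) between $L^3$ and $L^{3p/(3-p)}$ with Sobolev and \eqref{AS}, the same case split $s \gtrless 3$ for term $II$, and the same two applications of Young's inequality to absorb $\int|\psi^3 u|^3$. Your algebraic identities $(\alpha + \tfrac 3s)\tfrac{3}{3-\mu} = 1$ and $(2-\mu)\tfrac{3}{3-\mu} = \tfrac{9p}{2sp+3p-3s}$ with $\mu = 2 - \tfrac{3p}{s(2p-3)}$ are exactly the bookkeeping the paper carries out, and the requirement $\mu\in[0,1]$ is precisely the interval $\tfrac{3p}{2(2p-3)} \le s \le \tfrac{3p}{2p-3}$.
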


\begin{proof}
{ Arguing similarly to  the proof of Lemma \ref{LEMPP}, we get}

$$
\int_{B(R)} |\psi^3 u|^3 \mathrm{d}x 
\leq C \| \vect{V} - \vect{V}_{B(R)} \|_{L^s(B(R))} \| \psi \nabla u \|_{L^p} \| \psi^4 u \|_{L^{\frac {sp}{sp-s-p}}}
$$
\begin{equation*}
+ C (R - \rho)^{-1} \| \vect{V} - \vect{V}_{B(R)} \|_{L^s(B(R))} \| | \psi^4 u |^2 \|_{L^{\frac s{s-1}}} = I + II.
\end{equation*}
{ Firstly, we estimate $I$}. { Since}
$$
3 \leq \frac {sp}{sp - s - p} \leq \frac {3p}{3-p},
$$
Gagliardo–Nirenberg interpolation inequality and H\"{o}lder's inequality imply that
$$
\| \psi^4 u \|_{L^{\frac {sp}{sp - s - p}}} 
\leq C \left( \| \psi^4 \nabla u \|_{L^p} + (R - \rho)^{-1} \| \psi^3 u \|_{L^p} \right)^{\frac {3p + 3s - 2sp}{s(2p - 3)}} \| \psi^4 u \|_{L^3}^{\frac {4sp - 3p - 6s}{s(2p - 3)}}
$$
$$
\leq C \| \psi \nabla u \|_{L^p}^{\frac {3p + 3s - 2sp}{s(2p - 3)}} \| \psi^3 u \|_{L^3}^{\frac {4sp - 3p - 6s}{s(2p - 3)}} + C \left( (R - \rho)^{-1} \| \psi^p u \|_{L^p} \right)^{\frac {3p + 3s - 2sp}{s(2p - 3)}} \| \psi^3 u \|_{L^3}^{\frac {4sp - 3p - 6s}{s(2p - 3)}}.
$$
We insert it into $I$ and use \eqref{AS}, { and then} we apply Young's inequality twice. This { yields}
$$
I \leq C R^{\frac 3s + \frac 13 - \frac {5p - 9}{s(2p - 3)}} \| \psi \nabla u \|_{L^p}^{\frac {3p}{s(2p - 3)}} \| \psi^3 u \|_{L^3}^{\frac {4sp - 3p - 6s}{s(2p - 3)}}
$$
$$
+ C R^{\frac 3s + \frac 13 - \frac {5p - 9}{s(2p - 3)}} \| \psi \nabla u \|_{L^p} \left( (R - \rho)^{-1} \| \psi^p u \|_{L^p} \right)^{\frac {3p + 3s - 2sp}{s(2p - 3)}} \| \psi^3 u \|_{L^3}^{\frac {4sp - 3p - 6s}{s(2p - 3)}}
$$
$$
\leq C R \| \psi \nabla u \|_{L^p}^{\frac {9p}{2sp + 3p - 3s}} + C R \| \psi \nabla u \|_{L^p}^{\frac {3s(2p - 3)}{2sp + 3p - 3s}} \left( (R - \rho)^{-1} \| \psi^p u \|_{L^p} \right)^{\frac {9p + 9s - 6sp}{2sp + 3p - 3s}} 
$$
$$
+ \frac 14 \int |\psi^3 u|^3 \mathrm{d}x
$$
\begin{equation}\label{3PI1}
\leq C R \| \psi \nabla u \|_{L^p}^{\frac {9p}{2sp + 3p - 3s}} + C R \left( (R - \rho)^{-1} \| \psi^p u \|_{L^p} \right)^{\frac {9p}{2sp + 3p - 3s}} + \frac 14 \int |\psi^3 u|^3 \mathrm{d}x.
\end{equation}

{ Secondly, we estimate $II$}. First, let $s \geq 3$. If we apply H\"{o}lder's inequality to $II$ and use \eqref{AS} along with Young's inequality, it follows
$$
II \leq C (R - \rho)^{-1} R \left( \fint_{B(R)} |\vect{V} - \vect{V}_{B(R)} |^s \mathrm{d}x \right)^{\frac 1s} \| \psi^3 u \|_{L^{3}}^2
$$
\begin{equation}\label{3PII1}
\leq C (R - \rho)^{-3} R^{4 - \frac {3(5p - 9)}{s(2p - 3)}} + \frac 14 \int |\psi^3 u|^3 \mathrm{d}x.
\end{equation}
\eqref{3PI1} and \eqref{3PII1} shows \eqref{3P3S}.
Now we let $s < 3$. Since
$$
\frac 32 < \frac s{s-1} \leq \frac {3p}{6 - p} < \frac {3p}{2(3 - p)},
$$
the { interpolation} inequality and Sobolev inequality imply that
$$
\| | \psi^4 u |^2 \|_{L^{\frac s{s-1}}} \leq \| | \psi^4 u |^2 \|_{L^{\frac {3p}{2(3-p)}}}^{\frac {p(3-s)}{2s(2p-3)}} \| | \psi^4 u |^2 \|_{L^{\frac 32}}^{\frac {5sp - 3p - 6s}{2s(2p - 3)}}
$$
$$
\leq \| \psi^4 u \|_{L^{\frac {3p}{3-p}}}^{\frac {p(3-s)}{s(2p - 3)}} \| \psi^4 u \|_{L^3}^{\frac {5sp - 3p - 6s}{s(2p-3)}}
$$
$$
\leq C \left( \| \psi^4 \nabla u \|_{L^p} + (R - \rho)^{-1} \| \psi^3 u \|_{L^p} \right)^{\frac {p(3-s)}{s(2p-3)}} \| \psi^4 u \|_{L^3}^{\frac {5sp - 3p - 6s}{s(2p - 3)}}
$$
$$
\leq C \| \psi \nabla u \|_{L^p}^{\frac {p(3-s)}{s(2p-3)}} \| \psi^3 u \|_{L^3}^{\frac {5sp - 3p - 6s}{s(2p-3)}} + C \left( (R - \rho)^{-1} \| \psi^p u \|_{L^p} \right)^{\frac {p(3-s)}{s(2p - 3)}} \| \psi^3 u \|_{L^3}^{\frac {5sp - 3p - 6s}{s(2p-3)}}.
$$
Similarly as above, { we obtain}
$$
II \leq C (R - \rho)^{-1} R^{\frac 3s + \frac 13 - \frac {5p - 9}{s(2p - 3)}} \| \psi \nabla u \|_{L^p}^{\frac {p(3-s)}{s(2p-3)}} \| \psi^3 u \|_{L^3}^{\frac {5sp - 3p - 6s}{s(2p - 3)}}
$$
$$
+ C (R - \rho)^{-1} R^{\frac 3s + \frac 13 - \frac {5p - 9}{s(2p - 3)}} \left( (R - \rho)^{-1} \| \psi^p u \|_{L^p} \right)^{\frac {p(3-s)}{s(2p-3)}} \| \psi^3 u \|_{L^3}^{\frac {5sp - 3p - 6s}{s(2p - 3)}}
$$
$$
\leq C (R - \rho)^{-\frac {3s(2p - 3)}{sp + 3p - 3s}} R^{\frac {2sp + 3p - 3s}{sp + 3p - 3s}} \| \psi \nabla u \|_{L^p}^{\frac {3p(3 - s)}{sp + 3p - 3s}}
$$
\begin{equation}\label{3PII2}
+ C (R - \rho)^{-\frac {3s(2p - 3)}{sp + 3p - 3s}} R^{\frac {2sp + 3p - 3s}{sp + 3p - 3s}} \left( (R - \rho)^{-1} \| \psi^p u \|_{L^p} \right)^{\frac {3p(3 - s)}{sp + 3p - 3s}} + \frac 14 \int |\psi^3 u|^3 \mathrm{d}x.
\end{equation}
By \eqref{3PI1} and \eqref{3PII2} we complete the proof.
\end{proof}

\section{Proof of Theorem~\ref{thm1}}

We assume all conditions for Theorem~\ref{thm1} { are fulfilled}. Note that in the case of $\frac {3p}{2p - 3} < s < + \infty$, we have by Jensen's inequality and \eqref{AS}
$$
\left( \frac 1{| B(r) |} \int_{B(r)} |\vect{V} - \vect{V}_{B(r)} |^{\frac {3p}{2p - 3}} \mathrm{d}x \right)^{\frac {2p - 3}{3p}} \leq \left( \frac 1{| B(r) |} \int_{B(r)} |\vect{V} - \vect{V}_{B(r)} |^s \mathrm{d}x \right)^{\frac 1s} \leq C r^{\frac 3p - \frac 43}.
$$
This { shows  that} we can { reduce \eqref{AS} to  that case of} $s = \frac {3p}{2p - 3}$. 
{ Hence, in general we may  ristrict the range of $s$ to}
$$
\frac {3p}{2(2p - 3)} \leq s \leq \frac {3p}{2p - 3}, \qquad s > \frac {9 - 3p}{2p - 3}.
$$

Let $1 < r < + \infty$ be arbitrarily chosen. We set $r \leq \rho < R \leq 4r$ and $\overline{R} = \frac {R + \rho}2$. The first claim is that
\begin{equation}\label{EI}
\int_{\R^3} \left| \nabla u \right|^p \mathrm{d}x < + \infty.
\end{equation}
Let $\zeta \in C^{\infty}_{c}(B(\overline{R}))$ be a radially non-increasing function such that $0 \leq \zeta \leq 1$, $\zeta = 1$ on $B(\rho)$ and $|\nabla \zeta| \leq C (R - \rho)^{-1}$ for some $C > 0$. If we insert $\phi = \zeta^p$ into \eqref{LEI}, then we have
$$
\int_{B(\overline{R})} |\vect{D}(u)|^p \zeta^p \mathrm{d}x = - \int_{B(\overline{R})} | \vect{D}(u) |^{p - 2} \vect{D}(u) : u \otimes \nabla \zeta^p \mathrm{d}x
$$
$$
+ \frac 12 \int_{B(\overline{R})} | u |^2 u \cdot \nabla \zeta^p \mathrm{d}x + \int_{B(\overline{R})} (\uppi - \uppi_{B(\overline{R})}) u \cdot \nabla \zeta^p  \mathrm{d}x.
$$
H\"{o}lder's inequality and Young's inequality imply that
$$
\int_{B(\overline{R})} |\vect{D}(u)|^p \zeta^p \mathrm{d}x \leq C \int_{B(\overline{R})} |u|^p |\nabla \zeta|^p 
\mathrm{d}x
$$
$$
+ C \int_{B(\overline{R})} | u |^3 |\nabla \zeta| |\zeta|^{p-1} \mathrm{d}x + C \int_{B(\overline{R})} |\uppi - \uppi_{B(\overline{R})}|^{\frac 32} |\nabla \zeta| |\zeta|^{p-1} \mathrm{d}x.
$$
Employing Calderón-Zygmund's inequality, we obtain
\begin{equation}\label{CZ}
\int \left| \nabla ( u \zeta ) \right|^p \mathrm{d}x \leq C \int \left| { \vect{D}(u)} \right|^p \zeta^p \mathrm{d}x + C \int \left| u \right|^p \left| \nabla \zeta \right|^p \mathrm{d}x.
\end{equation}
{ Using \eqref{CZ} along  with  \eqref{PE}}, it follows
$$
\int_{B(\rho)} |\nabla u|^p \mathrm{d}x \leq C (R - \rho)^{-p} \int_{B(\overline{R})} |u|^p \mathrm{d}x + C (R - \rho)^{-1} \int_{B(\overline{R})} | u |^3 \mathrm{d}x
$$
$$
+ C (R - \rho)^{-1} R^{\frac 32 \left( \frac3p - 1 \right)} \left( \int_{B(R)} \left| \nabla u \right |^p \mathrm{d}x \right)^{\frac 32 \left(1 - \frac 1p \right)}.
$$
We consider $\psi \in C^{\infty}_c(B(R))$ a radially non-increasing function satisfying $0 \leq \psi \leq 1$, $\psi = 1$ on $B(\overline{R})$ and $|\nabla \psi| \leq C (R - \rho)^{-1}$ for some $C > 0$. By { the properties} of $\psi$ we have that
$$
\int_{B(\rho)} |\nabla u|^p \mathrm{d}x \leq C (R - \rho)^{-p} \int_{B(R)} |\psi^p u|^p \mathrm{d}x + C (R - \rho)^{-1} \int_{B(R)} |\psi^3 u|^3 \mathrm{d}x
$$
\begin{equation}\label{123}
+ C (R - \rho)^{-1} R^{\frac 32 \left( \frac3p - 1 \right)} \left( \int_{B(R)} \left| \nabla u \right |^p \mathrm{d}x \right)^{\frac 32 \left(1 - \frac 1p \right)} = I + II + III.
\end{equation}

Let $\epsilon > 0$ be an arbitrary real number. Before calculating $II$ first, we note that $\psi$ satisfies the assumptions for Lemma~\ref{LEMPP} and Lemma~\ref{LEM3P}. Observing that for $s > \frac {9 - 3p}{2p - 3}$
$$
\frac {9p}{2sp + 3p - 3s} < p,
$$
{ we may apply} Young's inequality to \eqref{3P3S} for $s \geq 3$. This yields
$$
II \leq C(\epsilon) (R - \rho)^{-\frac {2sp + 3p - 3s}{2sp - 3s + 3p - 9}} R^{\frac {2sp + 3p - 3s}{2sp - 3s + 3p - 9}} + I
$$
$$
+ C (R - \rho)^{-4} R^{4 - \frac {3(5p - 9)}{s(2p - 3)}} + \epsilon \int_{B(R)} |\psi \nabla u|^p \mathrm{d}x.
$$
{ We continue  estimating $I$}. Since
$$
4 - \frac {3(5p - 9)}{s(2p - 3)} < 4,
$$
{ we get for $R > 1$}
$$
II \leq C(\epsilon) (R - \rho)^{-\frac {2sp + 3p - 3s}{2sp - 3s + 3p - 9}} R^{\frac {2sp + 3p - 3s}{2sp - 3s + 3p - 9}} + C (R - \rho)^{-4} R^4 + \epsilon \int_{B(R)} |\psi \nabla u|^p \mathrm{d}x.
$$
In the case of $s < 3$, { we  see that} for $s > \frac {9 - 3p}{2p - 3}$ { it holds}
$$
\frac {3p(3 - s)}{sp - 3s + 3p} < \frac {3p(3 - s)}{s(2p - 3) - 3s + 3p} < p.
$$
Thus \eqref{3PS3} and Young's inequality give
$$
II \leq C(\epsilon) (R - \rho)^{-\frac {2sp + 3p - 3s}{2sp - 3s + 3p - 9}} R^{\frac {2sp + 3p - 3s}{2sp - 3s + 3p - 9}} + I
$$
$$
+ C(\epsilon) (R - \rho)^{-\frac {7sp + 3p - 12s}{sp + 3p - 9}} R^{\frac {2sp + 3p - 3s}{sp + 3p - 9}} + \epsilon \int_{B(R)} |\psi \nabla u|^p \mathrm{d}x.
$$
Since
$$
\frac {2sp + 3p - 3s}{sp + 3p - 9} < \frac {7sp + 3p - 12s}{sp + 3p - 9},
$$
$R > 1$ shows that
$$
II \leq C(\epsilon) (R - \rho)^{-\frac {2sp + 3p - 3s}{2sp - 3s + 3p - 9}} R^{\frac {2sp + 3p - 3s}{2sp - 3s + 3p - 9}} + C(\epsilon) (R - \rho)^{-\frac {7sp + 3p - 12s}{sp + 3p - 9}} R^{\frac {7sp + 3p - 12s}{sp + 3p - 9}} 
$$
$$
+ \epsilon \int_{B(R)} |\psi \nabla u|^p \mathrm{d}x.
$$
Hence,  { in both cases we obtain the following estimate}
$$
II \leq C(\epsilon) (R - \rho)^{-\frac {7sp + 3p - 12s}{sp + 3p - 9}} R^{\frac {7sp + 3p - 12s}{sp + 3p - 9}} + C(\epsilon) (R - \rho)^{-\frac {2sp + 3p - 3s}{2sp - 3s + 3p - 9}} R^{\frac {2sp + 3p - 3s}{2sp - 3s + 3p - 9}}
$$
\begin{equation}\label{EII1}
+ \epsilon \int_{B(R)} |\psi \nabla u|^p \mathrm{d}x. 
\end{equation}

Now we estimate $I$. If $s \geq p$, using \eqref{PPPS} and Young's inequality, { we see that}
$$
I \leq C(\epsilon) (R - \rho)^{-2p} R^{3 + \frac p3 - \frac {p(5p - 9)}{s(2p - 3)}} + \epsilon \int_{B(R)} |\psi \nabla u|^p \mathrm{d}x.
$$
Since
\begin{equation*}
3 + \frac p3 - \frac {p(5p - 9)}{s(2p - 3)} \leq 6 - \frac {4p}3
\end{equation*}
for $s \leq \frac {3p}{2p - 3}$, $R > 1$ implies
\begin{equation}\label{SISP}
I \leq C(\epsilon) (R - \rho)^{-2p} R^{6 - \frac {4p}3} + \epsilon \int_{B(R)} |\psi \nabla u|^p \mathrm{d}x
\end{equation}
$$
\leq C(\epsilon) (R - \rho)^{-2p} R^{2p} + \epsilon \int_{B(R)} |\psi \nabla u|^p \mathrm{d}x.
$$
{ Notice that for $s < p$, }
$$
0 < \frac {p(sp - 3s + 3p)}{2sp - 3s + 3p} < p
$$
and
$$
0 < \frac {p(3p - 3s)}{sp - 3s + 3p} < p.
$$
By applying Young's inequality to \eqref{PPSP} { we get}
\begin{equation}\label{SIPS}
I \leq C(\epsilon) (R - \rho)^{- 2p + 3 - \frac {3p}s} R^{\frac p3 + \frac {p^2}{s(2p - 3)}} + \epsilon \int_{B(R)} |\psi \nabla u|^p \mathrm{d}x.
\end{equation}
Since
\begin{equation}\label{EST1}
\frac p3 + \frac {p^2}{s(2p - 3)} < 2p - 3 + \frac {3p}s \leq 6p - 9
\end{equation}
for $s \geq \frac {3p}{2(2p - 3)}$, $R > 1$ and $R(R - \rho)^{-1} > 1$ imply that
$$
I \leq C(\epsilon) (R - \rho)^{-(6p - 9)} R^{6p - 9} + \epsilon \int_{B(R)} |\psi \nabla u|^p \mathrm{d}x.
$$
Therefore, in each case we obtain
\begin{equation}\label{EI1}
I \leq  C(\epsilon) (R - \rho)^{-2p} R^{2p} + C(\epsilon) (R - \rho)^{-(6p - 9)} R^{6p - 9} + \epsilon \int_{B(R)} |\psi \nabla u|^p \mathrm{d}x.
\end{equation}

Applying Young's inequality to $III$, { we find}
$$
III \leq C(\epsilon) (R - \rho)^{-\frac {2p}{3-p}} R^3 + \epsilon \int_{B(R)} |\psi \nabla u|^p \mathrm{d}x.
$$
By $3 < \frac {2p}{3-p}$ and $R > 1$, it follows that
\begin{equation}\label{EIII1}
III \leq C(\epsilon) (R - \rho)^{-\frac {2p}{3-p}} R^{\frac {2p}{3-p}} + \epsilon \int_{B(R)} |\psi \nabla u|^p \mathrm{d}x.
\end{equation}
We define
$$
\gamma := \max \left\{ \frac {7sp + 3p - 12s}{sp + 3p - 9}, \frac {2sp + 3p - 3s}{2sp - 3s + 3p - 9}, 2p, 6p - 9, \frac {2p}{3-p} \right\}.
$$
From \eqref{EII1}, \eqref{EI1}, \eqref{EIII1} and $R(R - \rho)^{-1} > 1$ { we deduce  that}
$$
I + II + III \leq C(\epsilon) (R - \rho)^{-\gamma} R^{\gamma} + \epsilon \int_{B(R)} |\nabla u|^p \mathrm{d}x.
$$
{ Inserting this estimate  into} \eqref{123} and applying the iteration Lemma in \cite[Lemma~3.1]{G83} for sufficiently small $\epsilon > 0$, we are led to
$$
\int_{B(\rho)} \left| \nabla u \right|^p \mathrm{d}x \leq C (R - \rho)^{-\gamma} R^{\gamma}.
$$
By taking $R = 2r$, $\rho = r$ and passing $r \rightarrow + \infty$, we obtain \eqref{EI}.

Secondly we claim that
\begin{equation}\label{LPo1}
r^{-p} \int_{B(2r) \setminus B(r)} \left| u \right|^p \mathrm{d}x = o(1) \qquad \mbox{as} \qquad r \rightarrow + \infty.
\end{equation}
We consider a cut-off function $\psi \in C^{\infty}_c (B(4r) \setminus B(\frac r2))$ satisfying $0 \leq \psi \leq 1$, $\psi = 1$ on $B(2r) \setminus B(r)$ and $|\nabla \psi| \leq Cr^{-1}$. Then $\psi$ satisfies the assumptions for Lemma~\ref{LEMPP} when $R = 4r$ and $\rho = r$. Hence, in the case of $s \geq p$ we use \eqref{SISP} to obtain
$$
r^{-p} \int_{B(4r)} |\psi^p u|^p \mathrm{d}x \leq C r^{6 - \frac {10p}3} + C \int_{B(4r)} |\psi \nabla u|^p \mathrm{d}x 
$$
$$
\leq C r^{6 - \frac {10p}3} + C \int_{B(4r) \setminus B(\frac r2)} |\nabla u|^p \mathrm{d}x.
$$
If $s < p$, using \eqref{SIPS}, we have that
$$
r^{-p} \int_{B(4r)} |\psi^p u|^p \mathrm{d}x \leq C r^{\frac p3 + \frac {p^2}{s(2p - 3)} - \left( 2p - 3 + \frac {3p}s \right)} + C \int_{B(4r) \setminus B(\frac r2)} |\nabla u|^p \mathrm{d}x.
$$
Thus, observing \eqref{EST1} and \eqref{EI}, we obtain
\begin{equation}\label{SILPo1}
r^{-p} \int_{B(4r)} |\psi^p u|^p \mathrm{d}x = o(1) \qquad \mbox{as} \qquad r \rightarrow + \infty
\end{equation}
which implies \eqref{LPo1}.

Next, we claim
\begin{equation}\label{L3o1}
r^{-1} \int_{B(2r) \setminus B(r)} \left| u \right|^3 \mathrm{d}x = o(1) \qquad \mbox{as} \qquad r \rightarrow + \infty
\end{equation}
and
\begin{equation}\label{L3O1}
r^{-1} \int_{B(r)} \left| u \right|^3 \mathrm{d}x = O(1) \qquad \mbox{as} \qquad r \rightarrow + \infty.
\end{equation}
We set the same function $\psi \in C^{\infty}_c (B(4r) \setminus B(\frac r2))$ with $R = 4r$ and $\rho = r$. For $s \geq 3$ we can use \eqref{3P3S} { to infer}
$$
r^{-1} \int_{B(4r)} |\psi^3 u|^3 \mathrm{d}x 
$$
$$
\leq C \bigg( \int_{B(4r) \setminus B(\frac r2)} |\nabla u|^p \mathrm{d}x \bigg)^{\frac {9}{2sp + 3p - 3s}} + C \bigg( r^{-p} \int_{B(4r)} |\psi^p u|^p \mathrm{d}x \bigg)^{\frac {9}{2sp + 3p - 3s}} + C r^{- \frac {3(5p - 9)}{s(2p - 3)}}.
$$
{ In case} $s < 3$, \eqref{3PS3} gives that
$$
r^{-1} \int_{B(4r)} |\psi^3 u|^3 \mathrm{d}x 
$$
$$
\leq C \bigg( \int_{B(4r) \setminus B(\frac r2)} |\nabla u|^p \mathrm{d}x \bigg)^{\frac {9}{2sp + 3p - 3s}} + C \bigg( r^{-p} \int_{B(4r)} |\psi^p u|^p \mathrm{d}x \bigg)^{\frac {9}{2sp + 3p - 3s}}
$$
$$
+ C r^{\frac {s(9 - 5p)}{sp + 3p - 3s}} \bigg( \int_{B(4r) \setminus B(\frac r2)} |\nabla u|^p \mathrm{d}x \bigg)^{\frac {3(3 - s)}{sp + 3p - 3s}} + C r^{\frac {s(9 - 5p)}{sp + 3p - 3s}} \bigg( r^{-p} \int_{B(4r)} |\psi^p u|^p \mathrm{d}x \bigg)^{\frac {3(3 - s)}{sp + 3p - 3s}}.
$$
In each case \eqref{EI} and \eqref{SILPo1} imply \eqref{L3o1}.

To verify \eqref{L3O1} we choose $r_0 > 1$ arbitrarily and let $r > r_0$. Then for $j \in \N$ satisfying $r < 2^j r_0$, we have
$$
r^{-1} \int_{B(r)} \left| u \right|^3 \mathrm{d}x \leq r^{-1} \int_{B(2r) \setminus B(r)} \left| u \right|^3 \mathrm{d}x + \frac 12 \bigg( \left( \frac r2 \right)^{-1} \int_{B(r) \setminus B(\frac r2)} \left| u \right|^3 \mathrm{d}x \bigg) + \ldots
$$
$$
+ \frac 1{2^j} \bigg( \left( \frac r{2^j} \right)^{-1} \int_{B(\frac r{2^{j - 1}}) \setminus B(\frac r{2^j})} |u|^3 \mathrm{d}x \bigg) + r^{-1} \int_{B(r_0)} |u|^3 \mathrm{d}x
$$
$$
\leq 2 \sup_{r_0 \leq \widetilde{r} < + \infty} \left\{ \widetilde{r}^{-1} \int_{B(2\widetilde{r}) \setminus B(\widetilde{r})} \left| u \right|^3 \mathrm{d}x \right\} + r^{-1} \int_{B(r_0)} |u|^3 \mathrm{d}x.
$$
Then for sufficiently large $r_0 > 1$, \eqref{L3O1} is obtained by \eqref{L3o1} and 
{ $u \in W^{1,p}(B(r_{0})) \hookrightarrow L^3(B(r_{0}))$}.\\

\begin{pfthm1}
Let $\psi \in C^{\infty}_{c}(B(2r))$ satisfy $0 \leq \psi \leq 1$, $\psi = 1$ on $B(r)$ and $|\nabla \psi| \leq C r^{-1}$. 
{ We observe   \eqref{LEI} with  $\psi^2 = \phi$,  and apply H\"{o}lder's inequality, Young's inequality and Calderón-Zygmund's inequality to get }
$$
\int_{B(r)} |\nabla u|^p \mathrm{d}x \leq C \int_{B(2r)} |u|^p |\nabla \phi|^p \mathrm{d}x + C \int_{B(2r)} |u|^3 |\nabla \phi| \mathrm{d}x
$$
$$
+ C \int_{B(2r)} |\uppi - \uppi_{B(2r)}||u| |\nabla \phi| \mathrm{d}x
$$
$$
\leq C r^{-p} \int_{B(2r) \setminus B(r)} |u|^p \mathrm{d}x + C r^{-1} \int_{B(2r) \setminus B(r)} | u |^3 \mathrm{d}x 
$$
$$
+ C r^{-1} \int_{B(2r) \setminus B(r)} |\uppi - \uppi_{B(2r)}||u| \mathrm{d}x = IV + V + VI.
$$
The properties \eqref{LPo1} and \eqref{L3o1} directly shows that
$$
IV + V \rightarrow 0 \qquad \mbox{as} \qquad r \rightarrow + \infty.
$$
To estimate $VI$ we use H\"{o}lder's inequality and \eqref{PE} when $s = \frac 32$. This { yields}
$$
VI \leq C \left( r^{-1} \int_{B(2r)} |\pi - \pi_{B(2r)}|^{\frac 32} \mathrm{d}x \right)^{\frac 23} \left( r^{-1} \int_{B(2r) \setminus B(r)} | u |^3 \mathrm{d}x \right)^{\frac 13}
$$
$$
\leq C \left( r^{-1} \int_{B(2r)} |\nabla u|^{\frac {3(p-1)}2} \mathrm{d}x + r^{-1} \int_{B(2r)} |u|^3  \mathrm{d}x \right)^{\frac 23} \left( r^{-1} \int_{B(2r) \setminus B(r)} | u |^3 \mathrm{d}x \right)^{\frac 13}.
$$
{ According to  \eqref{L3O1}} it is sufficient to show that
$$
r^{-1} \int_{B(2r)} |\nabla u|^{\frac {3(p-1)}2} \mathrm{d}x = { O(1)} \qquad \mbox{as} \qquad r \rightarrow + \infty.
$$
On the other hand, H\"{o}lder's inequality with $\frac {3(p-1)}2 < p$ implies that
$$
r^{-1} \int_{B(2r)} |\nabla u|^{\frac {3(p-1)}2} \mathrm{d}x \leq r^{-1} |B(2r)|^{\frac 3{2p} - \frac 12} \left( \int_{B(2r)} |\nabla u|^p \mathrm{d}x \right)^{\frac {3(p - 1)}{2p}}
$$
$$
= r^{\frac 9{2p} - \frac 52} \left( \int_{B(2r)} |\nabla u|^p \mathrm{d}x \right)^{\frac {3(p - 1)}{2p}}.
$$
This implies
$$
VI \rightarrow 0 \qquad \mbox{as} \qquad r \rightarrow + \infty
$$
and
$$
\int_{B(r)} \left| \nabla u \right|^p = o(1) \qquad \mbox{as} \qquad r \rightarrow + \infty.
$$
Accordingly, $u \equiv const$ and by means of \eqref{L3o1}, we have that $u \equiv 0$.
\end{pfthm1}

\hspace{0.5cm}
$$\mbox{\bf Acknowledgements}$$
Chae's research  was partially supported by NRF grants 2021R1A2C1003234, and by the Chung-Ang University research grant in 2019.
Wolf has been { supported by }NRF grants 2017R1E1A1A01074536.
The authors declare that they have no conflict of interest.

\end{document}